\newif\ifdebug
\newif \iffig
\newif \iftable
\title[Gaps in the support of canonical currents on K3 surfaces]{Gaps in the support of canonical currents on projective K3 surfaces}
\dedicatory{Dedicated to the memory of Nessim Sibony.}
\thanks{Revised \textsc{\today} }
\date{January 2023}
\author{
	Simion Filip
}
\address{
	\parbox{0.5\textwidth}{
		Department of Mathematics\\
		University of Chicago\\
		5734 S University Ave\\
		Chicago, IL 60637\\}
}
\email{{sfilip@math.uchicago.edu}}
\author{
	Valentino Tosatti
}
\address{
	\parbox{0.7\textwidth}{Courant Institute of Mathematical Sciences\\
New York University\\
251 Mercer St\\
New York, NY 10012\\}
}
\email{{tosatti@cims.nyu.edu}}
\begin{document}

\begin{abstract}
	We construct examples of canonical closed positive currents on projective K3 surfaces that are not fully supported on the complex points.
	The currents are the unique positive representatives in their cohomology classes and have vanishing self-intersection.
	The only previously known such examples were due to McMullen on non-projective K3 surfaces and were constructed using positive entropy automorphisms with a Siegel disk.
	Our construction is based on a Zassenhaus-type estimate for commutators of automorphisms.
\end{abstract}

%
\maketitle
%
\noindent\hrulefill
\tableofcontents
\nointerlineskip
\noindent \hrulefill
\ifdebug
   \listoffixmes
\fi


\section{Introduction}
	\label{sec:introduction}

Let $X$ be a complex projective $K3$ surface and let $T\colon X\to X$ be an automorphism with positive topological entropy $h>0$.
Thanks to a foundational result of Cantat \cite{Cantat}, there are closed positive currents $\eta_{\pm}$ which satisfy
\begin{equation*}
	T^*\eta_{\pm}=e^{\pm h}\eta_{\pm},
\end{equation*}
and are normalized so that their cohomology classes satisfy $[\eta_+]\cdot[\eta_-]=1$. The classes $[\eta_{\pm}]$ belong to the boundary of the ample cone of $X$ and have vanishing self-intersection.
These eigencurrents have H\"older continuous local potentials \cite{DS}, and their wedge product $\mu=\eta_+\wedge\eta_-$ is well-defined by Bedford--Taylor theory, and is the unique $T$-invariant probability measure with maximal entropy.

When $(X,T)$ is not a Kummer example, it was shown by Cantat--Dupont \cite{CD} (with a new proof by the authors \cite{FT2} that also covers the nonprojective case) that $\mu$ is singular with respect to the Lebesgue measure $\dVol$; therefore there exists a Borel set of zero Lebesgue measure carrying the entire mass of $\mu$.
The authors conjectured (see \cite[Conjecture 7.3]{Tosatti_survey}) that the topological support $\supp \mu$ should nonetheless be equal to all of $X$, see also Cantat's \cite[Question 3.4]{Cantat2018_Automorphisms-and-dynamics:-a-list-of-open-problems}.
If this were true, it would also imply the same for each of the currents: $\supp  \eta_{\pm}=X$.

In \cite{FilipTosatti2021_Canonical-currents-and-heights-for-K3-surfaces} the authors showed that, under mild assumptions on $X$, the eigencurrents $\eta_{\pm}$ fit into a continuous family of closed positive currents with continuous local potentials whose cohomology classes sweep out the boundary of the ample cone, perhaps after blowing up the boundary at the rational rays.
We called the corresponding closed positive currents the \emph{canonical currents}.
It is then natural to wonder whether \emph{all} such canonical currents are fully supported on $X$.

In this note we show that this is in fact not the case.
Namely, we show in \autoref{thm:gaps_in_the_support_of_canonical_currents} below:

\begin{theoremintro}[Gaps in the support]
	\label{thm_intro:gaps_in_the_support}
	There exists a projective K3 surface $X$ of type $(2,2,2)$, and an uncountable dense $F_{\sigma}$ set of rays $F\subset \partial \Amp(X)$ in the boundary of its ample cone, such that for every $f\in F$ the topological support of the unique canonical current $\eta_f$ is not all of $X$.
\end{theoremintro}

\noindent Note that because the rank of the Picard group of a very general K3 surface of type $(2,2,2)$ is $3$, there is no need to blow up the rational directions on the boundary.
Moreover, the canonical currents in the rational directions (i.e. those where the ray spanned by $f$ intersects $H^2(X,\mathbb{Q})$ nontrivially) have full support, see \autoref{rmk:avoidance_of_parabolic_points}.

The above result can be strengthened to show that there exist K3 surfaces defined over $\bR$, such that the supports of certain canonical currents are disjoint from the (nonempty) real locus (see \autoref{thm:full_gaps_in_the_real_locus}):

\begin{theoremintro}[Full gaps in the real locus]
	\label{thm_intro:full_gaps_real_locus}
	There exists a projective K3 surface $X$ of type $(2,2,2)$ defied over $\bR$ with $X(\bR)\neq \emptyset$, and an uncountable dense $F_{\sigma}$ set of rays $F\subset \partial \Amp(X)$ in the boundary of its ample cone, such that for every $f\in F$ the topological support of the unique canonical current $\eta_f$ is disjoint from $X(\bR)$.
\end{theoremintro}
In the examples we construct, $X(\bR)$ is homeomorphic to a $2$-sphere.

McMullen \cite[Thm.~1.1]{McMullen2002_Dynamics-on-K3-surfaces:-Salem-numbers-and-Siegel-disks} constructed \emph{nonprojective} K3 surfaces with automorphisms whose eigencurrents $\eta_{\pm}$ are not fully supported.
In fact, his examples have a Siegel disc: an invariant neighborhood of a fixed point on which the dynamics is holomorphically conjugate to a rotation, and where $\eta_{\pm}$ thus vanish.
Let us also note that Moncet \cite[Thm.~A]{Moncet2013_Sur-la-dynamique-des-diffeomorphismes-birationnels-des-surfaces-algebriques-reelles:-ensemble} constructed a birational automorphism of a rational surface $X$ defined over $\bR$, with positive dynamical degree and Fatou set containing $X(\bR)$.

Despite our \autoref{thm_intro:gaps_in_the_support} above, we do maintain hope that on projective K3 surfaces, the measure of maximal entropy (and therefore also $\eta_{\pm}$) is fully supported.

\subsubsection*{Acknowledgments}
	\label{sssec:acknowledgements}
We are grateful to Roland Roeder for conversations on his work with Rebelo \cite{RebeloRoeder2021_Dynamics-of-groups-of-birational-automorphisms-of-cubic-surfaces-and-Fatou/Julia} that inspired this note, to Serge Cantat for detailed feedback that improved our exposition, and to the referee for useful comments.
We are also grateful to Serge Cantat for suggesting to combine our methods with an example of Moncet that led to \autoref{thm_intro:full_gaps_real_locus}.

This research was partially conducted during the period the first-named author served as a Clay Research Fellow, and during the second-named author's visit to the Center for Mathematical Sciences and Applications at Harvard University, which he would like to thank for the hospitality. This note is dedicated to the memory of Nessim Sibony, a dear colleague and friend, whose contributions to holomorphic dynamics and several complex variables remain an inspiration to us. He is greatly missed.



\section{Gaps in the support of canonical currents}
	\label{sec:gaps_in_the_support_of_canonical_currents}

\paragraph{Outline}
We recall some constructions and estimates, originally based on an idea of Ghys \cite{Ghys1993_Sur-les-groupes-engendres-par-des-diffeomorphismes-proches-de-lidentite}, itself inspired by the Zassenhaus lemma on commutators of small elements in Lie groups.
In brief, the idea is that if two germs of holomorphic maps near the origin are close to the identity, then their commutator is even closer, and the estimates are strong enough to allow for an iteration argument.

The precise estimates that we need for \autoref{thm_intro:gaps_in_the_support} are contained in \autoref{prop:fixed_points_with_small_derivative}, and we follow Rebelo--Roeder \cite{RebeloRoeder2021_Dynamics-of-groups-of-birational-automorphisms-of-cubic-surfaces-and-Fatou/Julia} to establish the needed bounds.
We then recall some basic facts concerning the geometry of K3 surfaces in \autoref{ssec:2_2_2_surfaces_and_canonical_currents}, and establish the existence of gaps in the support of some of their canonical currents in \autoref{ssec:an_example_with_slow_commutators}.


\subsection{Commutator estimates}
	\label{ssec:commutator_estimates}

In this section we introduce notation and collect some results, stated and proved by Rebelo--Roeder \cite{RebeloRoeder2021_Dynamics-of-groups-of-birational-automorphisms-of-cubic-surfaces-and-Fatou/Julia} but which have also been known and used in earlier contexts, e.g. by Ghys \cite{Ghys1993_Sur-les-groupes-engendres-par-des-diffeomorphismes-proches-de-lidentite}.
The results are concerned with commutators of germs of holomorphic maps in a neighborhood of $0\in \bC^d$.

\subsubsection{Derived series and commutators}
	\label{sssec:derived_series}
Fix a set $S$, whose elements we regard as formal symbols which can be juxtaposed to form words.
Assume that $S$ is equipped with a fixed-point-free involution $s\mapsto s^{-1}$, i.e. any element has a unique corresponding ``inverse'' in the set.
Define the ``derived series'' of sets by
\[
	S^{(0)}:= S \quad S^{(n+1)}:= \left[S^{(n)},S^{(n)}\right]
\]
where $[A,B]$ denotes the set of commutators $[a,b]:=aba^{-1}b^{-1}$ with $a\in A,b\in B$, and we omit the trivial commutators $[a,a^{-1}]$.
Denote the disjoint union by $S^{\bullet}:=\coprod_{n\geq 0} S^{(n)}$.
We will use the same notation in the case of a pseudogroup.
We also collect the next elementary result:

\begin{proposition}[Fast ramification]
	\label{prop:fast_ramification}
	Let $F_k$ denote the free group on $k$ generators $a_1,\ldots, a_k$.
	Set $S^{(0)}:=\{a_1,\ldots, a_k,a_1^{-1},\ldots, a_k^{-1}\}$.

	Then the ${\binom{k}{2}}$ elements $[a_i,a_j]\in S^{(1)}$ with $i<j$, generate a free subgroup of rank $\binom{k}{2}$ inside $F_k$.
\end{proposition}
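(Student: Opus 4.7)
The proof proposal is to combine Nielsen--Schreier with a straightforward lower-central-series computation. The main point is to exhibit $\binom{k}{2}$ invariants that distinguish the generators $[a_i,a_j]$ modulo deeper commutators, and then invoke a rank-counting argument.

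\textbf{Step 1: The lower central quotient.} Let $\gamma_2(F_k) := [F_k, F_k]$ and $\gamma_3(F_k) := [\gamma_2(F_k), F_k]$. The free nilpotent group $F_k / \gamma_3(F_k)$ of class $2$ is well known to have underlying set $\bZ^k \times \wedge^2 \bZ^k$, with the abelianization map $F_k \twoheadrightarrow \bZ^k$ sending $a_i \mapsto e_i$, and with the induced isomorphism
\[
\gamma_2(F_k)/\gamma_3(F_k) \;\xrightarrow{\;\sim\;}\; \wedge^2 \bZ^k, \qquad [a_i,a_j] \longmapsto e_i \wedge e_j.
\]
In particular, the images of the $\binom{k}{2}$ commutators $[a_i,a_j]$ with $i<j$ form a $\bZ$-basis of $\wedge^2\bZ^k$.

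\textbf{Step 2: Lower bound on the rank.} Let $H \subseteq F_k$ be the subgroup generated by the $[a_i,a_j]$ with $i<j$. By the Nielsen--Schreier theorem, any subgroup of a free group is free, so $H$ is free; let $r = \mathrm{rank}(H)$, so that $H^{ab} \cong \bZ^r$. The composition $H \hookrightarrow \gamma_2(F_k) \twoheadrightarrow \gamma_2(F_k)/\gamma_3(F_k) \cong \wedge^2 \bZ^k$ is a homomorphism from $H$ to an abelian group, so it factors through $H^{ab}$. By Step 1 its image contains the basis $\{e_i \wedge e_j\}_{i<j}$, hence is all of $\wedge^2 \bZ^k$. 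Therefore $H^{ab}$ surjects onto $\bZ^{\binom{k}{2}}$, which forces $r \geq \binom{k}{2}$.

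\textbf{Step 3: Upper bound and freeness of the generating set.} Since $H$ is generated by $\binom{k}{2}$ elements and is free of rank $r$, and since a free group cannot be generated by fewer elements than its rank, we get $r \leq \binom{k}{2}$, and combined with Step 2, $r = \binom{k}{2}$. To conclude that the generators $[a_i,a_j]$ themselves form a free basis (rather than merely generating a free group of the correct rank), consider the surjection $F_{\binom{k}{2}} \twoheadrightarrow H$ sending the standard generators to the $[a_i,a_j]$. Both groups are free of rank $\binom{k}{2}$, and finitely generated free groups are Hopfian, so this surjection is an isomorphism, proving the claim.

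\textbf{Expected difficulty.} The argument is almost mechanical once one recalls the identification $\gamma_2/\gamma_3 \cong \wedge^2 \bZ^k$; the only ``content'' is the well-known computation of the free nilpotent group of class $2$, which could alternatively be verified by a direct commutator calculus (e.g.\ expanding $[a_ia_k, a_j] = a_i[a_k,a_j]a_i^{-1}\cdot[a_i,a_j] \equiv [a_k,a_j] + [a_i,a_j] \pmod{\gamma_3}$). Consequently no genuine obstacle is anticipated; the statement should follow in a few lines without invoking anything deeper than Nielsen--Schreier and Hopficity of finite-rank free groups.
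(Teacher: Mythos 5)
Your proof is correct, but takes a genuinely different route from the paper's. The paper argues combinatorially and directly: given a reduced word $[a_{i_1},a_{j_1}]\cdots[a_{i_N},a_{j_N}]$ in the commutators (meaning no adjacent pair is $[a_i,a_j][a_j,a_i]$), it writes the word out in the letters $a_\bullet^{\pm 1}$ and shows that at each of the $N-1$ junctions at most two letters cancel, so the reduced word has length at least $4N-2(N-1)=2N+2>0$; this proves freeness with no outside input. You instead pass to the algebraic shadow $\gamma_2(F_k)/\gamma_3(F_k)\cong\wedge^2\bZ^k$ (the degree--2 piece of the Witt/Magnus isomorphism): the image of $H=\langle [a_i,a_j]\colon i<j\rangle$ there is all of $\wedge^2\bZ^k$, so $\operatorname{rank}(H^{\mathrm{ab}})\geq\binom{k}{2}$; combined with Nielsen--Schreier (to know $H$ is free, so $H^{\mathrm{ab}}\cong\bZ^{\operatorname{rank}H}$) and the trivial upper bound on the number of generators this pins down the rank, and Hopficity of finitely generated free groups upgrades "$\binom{k}{2}$ generators of a free group of rank $\binom{k}{2}$" to "free basis." Your argument is cleaner in that it needs no ad hoc bookkeeping of cancellations, and it generalizes smoothly to higher brackets via higher graded pieces of the lower central series; the cost is reliance on Nielsen--Schreier, Hopficity, and the identification of $\gamma_2/\gamma_3$, none of which the paper's two-line letter count requires. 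The direct approach also hands you a quantitative statement (an explicit lower bound on reduced word length), which the structural argument does not.
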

\begin{proof}
	Observe that $[a_i,a_j]^{-1}=[a_j,a_i]$.
	Therefore, it suffices to check that any word of the form
	\[
		[a_{i_1},a_{j_1}]\cdots [a_{i_l},a_{j_l}] \cdots [a_{i_N},a_{j_N}]
	\]
	is never trivial, subject to the condition that no commutator is followed by its inverse.
	Explicitly, we assume that for any $l$ either $a_{i_l}\neq a_{j_{l+1}}$ or $a_{j_l}\neq a_{i_{l+1}}$.

	To proceed, we write out the expression in the generators $a_{\bullet}$.
	Observe that a cancellation can only occur if $a_{j_l}=a_{i_{l+1}}$ for some $l$.
	However, subsequent cancellations are excluded by assumption so the reduced word has at least $4N-2(N-1)=2N+2$ letters and is nonempty.
\end{proof}

Later on, we will apply iteratively this proposition, starting with $k\geq 4$, an inequality which is preserved by $k\mapsto {\binom{k}{2}}$.

\subsubsection{Pseudogroup of transformations}
	\label{sssec:pseudo_groups_of_transformations}
Let $B_0(\ve)\subset \bC^d$ denote the ball of radius $\ve>0$ centered at the origin in $\bC^d$.
Let $\gamma_1,\ldots, \gamma_k$ be injective holomorphic maps $\gamma_i\colon B_0(\ve)\to \bC^d$, which are thus biholomorphisms onto their ranges $\cR_{\gamma_i}:=\gamma_i(B_0(\ve))$.

Let $S$ denote the set with $2k$ symbols $\gamma_1,\ldots,\gamma_k,\gamma_1^{-1},\ldots,\gamma_k^{-1}$.
With $S^{\bullet}$ as in \autoref{sssec:derived_series}, assign to any element $\gamma\in S^{\bullet}$, whenever possible, the holomorphic map also denoted by $\gamma\colon \cD_{\gamma}\to \cR_{\gamma}$ with open sets $\cD_{\gamma},\cR_{\gamma}\subset \bC^d$ by expressing $\gamma$ in reduced form in the letters from $S$, and shrinking the domains/ranges according to the word.
For certain elements $\gamma$, these might well be empty sets.

Denote by $\id$ the identity transformation and by $\norm{f}_{C^0(K)}$ the supremum norm of a function or map $f$ on a set $K$.
\begin{theorem}[Common domain of definition]
	\label{thm:common_domain_of_definition}
	For any given $0<\ve\leq 1$, if
	\[
        \norm{\gamma_i^{\pm 1}-\id}_{C^0(B_{0}(\ve))} \leq \frac{\ve}{32},
		\text{ for }i=1,\ldots,k
	\]
	then for every $n\geq 0$ and every $\gamma\in S^{(n)}$, its domain $\cD_{\gamma}$ contains $B_0(\ve/2)$ and furthermore it satisfies
	\[
		\norm{\gamma-\id}_{C^0(B_0(\ve/2))} \leq \frac{\ve}{2^n\cdot 32}.
	\]
\end{theorem}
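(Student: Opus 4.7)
The plan is to proceed by induction on $n$, with the base case $n=0$ being immediate: since $B_0(\ve/2)\subset B_0(\ve)$, the hypothesis already gives $\norm{\gamma_i^{\pm 1}-\id}_{C^0(B_0(\ve/2))}\leq \ve/32=\delta_0$, where throughout I set $\delta_n:=\ve/(2^n\cdot 32)$.

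For the inductive step I would strengthen the inductive hypothesis slightly, replacing $B_0(\ve/2)$ by nested balls $B_0(r_n)$ with $r_n:=\ve/2+\ve/2^{n+2}$, which decrease monotonically to $\ve/2$. The point is that the decrement $r_n-r_{n+1}=\ve/2^{n+3}$ equals $4\delta_n$, exactly the room needed to compose the four maps appearing in $[a,b]=a\circ b\circ a^{-1}\circ b^{-1}$ for $a,b\in S^{(n)}$: starting at $x\in B_0(r_{n+1})$, each successive application moves the point by at most $\delta_n$, so all intermediate images lie in $B_0(r_{n+1}+4\delta_n)\subset B_0(r_n)$, on which $a^{\pm 1},b^{\pm 1}$ are defined by the previous step of the induction.

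The main estimate is the Ghys--Zassenhaus commutator calculation. Writing $a=\id+u$ and $b=\id+v$ with $\norm{u}_{C^0(B_0(r_n))},\norm{v}_{C^0(B_0(r_n))}\leq \delta_n$, the elementary identity
\begin{align*}
	a\circ b-b\circ a=(u\circ b-u)-(v\circ a-v),
\end{align*}
combined with the mean-value inequality $\norm{u\circ b-u}\leq \norm{Du}\cdot\norm{v}$ and with Cauchy's integral formula (applied on the annulus between $B_0(r_{n+1}+\delta_n)$ and $B_0(r_n)$, of width $\sim \ve/2^n$) yields $\norm{Du}\lesssim \delta_n\cdot 2^n/\ve$. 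Multiplying the two small quantities gives $\norm{a\circ b-b\circ a}_{C^0(B_0(r_{n+1}))}\lesssim \delta_n^2\cdot 2^n/\ve\sim \delta_n/32$. Finally the identity $[a,b]=(a\circ b)\circ(b\circ a)^{-1}$, together with the fact that $b\circ a$ displaces points by at most $2\delta_n$, converts this into a bound of the same order on $\norm{[a,b]-\id}_{C^0(B_0(r_{n+1}))}$, which sits comfortably below the target $\delta_{n+1}=\delta_n/2$ provided the absolute constants work out --- a careful accounting of the dimensional constant in Cauchy's formula shows that they do.

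The main technical obstacle I expect is the bookkeeping of nested domains. Deeper commutators involve more successive compositions, and one must simultaneously arrange that every intermediate image stays inside the common domain of definition and that the Cauchy estimate has a positive buffer in which to operate. The geometric scaling $r_n-\ve/2=\ve/2^{n+2}$ is the natural choice that keeps $\sum_n (r_n-r_{n+1})$ summable and below the safety margin, while at the same time producing exactly the factor-of-$1/2$ improvement needed to close the induction; any slower decay of the buffer would either exhaust the target radius $\ve/2$ or fail to give Cauchy's estimate enough room to beat the halving target.
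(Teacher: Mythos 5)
Your high-level strategy — induction on $n$ with a family of nested balls shrinking to $B_0(\ve/2)$, the elementary commutator identity $a\circ b - b\circ a=(u\circ b-u)-(v\circ a-v)$, a mean-value bound relating this to $\norm{Du},\norm{Dv}$, and Cauchy's estimate to control those derivatives — is exactly the content of the lemma the paper quotes from Loray–Rebelo (itself after Ghys); you are essentially rederiving that lemma rather than citing it. So the route is the same. However, your choice of radii $r_n=\ve/2+\ve/2^{n+2}$ is too tight for the bookkeeping to close, and the claimed bound $\norm{a\circ b-b\circ a}\lesssim \delta_n/32$ does not hold as stated.

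The problem is where the Cauchy buffer comes from. You set $r_n-r_{n+1}=4\delta_n$ and spend all of it on the four compositions: the chain $b^{-1},a^{-1},b,a$ moves a point of $B_0(r_{n+1})$ through $B_0(r_{n+1}+4\delta_n)=B_0(r_n)$, consuming the entire decrement. But the bound on $[a,b]-\id$ requires evaluating your identity not at $x\in B_0(r_{n+1})$ but at $y=(b\circ a)^{-1}(x)$, which only lies in $B_0(r_{n+1}+2\delta_n)$; the segment $[y,b(y)]$ needed for the mean-value inequality then lies in $B_0(r_{n+1}+3\delta_n)$, and the remaining distance to $\partial B_0(r_n)$ — the width actually available to Cauchy — is only $\delta_n$, not $\sim \ve/2^n\approx 32\delta_n$ as your estimate assumes. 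Cauchy then gives $\norm{Du}\lesssim\delta_n/\delta_n=1$, which is useless, so the induction does not close. (Even if you evaluate at $x$ itself and absorb the displacement of $(b\circ a)^{-1}$ afterwards, as your prose suggests, the same shortfall reappears when you try to push the $C^0$ bound on $a\circ b-b\circ a$ from $B_0(r_{n+1})$ to the slightly larger ball where $y$ lives.) The fix is to reserve a separate buffer for the derivative estimate: the paper takes decrement $\ve_n-\ve_{n+1}=4\delta_n+\tau_n$ with $\tau_n=4\delta_n$, i.e.\ $8\delta_n$ in total, which one can afford by starting from $\ve_0=\ve$ rather than $3\ve/4$. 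With that extra $\tau_n$ the Loray–Rebelo inequality $\norm{[f,g]-\id}\leq (2/\tau)\norm{f-\id}\,\norm{g-\id}$ gives exactly $\delta_n/2=\delta_{n+1}$ and the induction closes with no slack to spare — which is also why your chosen constants, which are strictly smaller, cannot work.
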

\noindent This result is proved as in \cite[Prop.~7.1]{RebeloRoeder2021_Dynamics-of-groups-of-birational-automorphisms-of-cubic-surfaces-and-Fatou/Julia} or \cite[Prop.~3.1]{Rebelo_Reis}, which state it for $k=2$.
Indeed, the estimates in the proof only involve the estimates on the ``seed'' transformations $\gamma_i$, and not their cardinality. We include the proof for the reader's convenience.

Note also that without further assumptions on the $\gamma_i$, it could happen that $S^{(n)}$ contains only identity mappings.
In our intended applications, this will be excluded as the elements will act nontrivially in cohomology.

\begin{proof}
We will show by induction on $n\geq 0$ that for every  $\gamma\in S^{(n)}$ its domain $\cD_{\gamma}$ contains $B_0(\ve_n)$ where
$$\ve_n:=\ve-\frac{\ve}{4}\sum_{j=0}^{n-1}2^{-j}\geq \frac{\ve}{2},$$
and that
$$\norm{\gamma-\id}_{C^0(B_0(\ve_n))} \leq \frac{\ve}{2^n\cdot 32}.$$
The base case $n=0$ is obvious, and for the induction step the key result that we need is the following improvement \cite[Lemma 3.0]{Loray_Rebelo} of a result of Ghys \cite[Prop.~2.1]{Ghys1993_Sur-les-groupes-engendres-par-des-diffeomorphismes-proches-de-lidentite}: given constants $0<r,\delta,\tau<1$ with $4\delta+\tau<r$, if $f,g:B_0(r)\to \mathbb{C}^d$ are two injective holomorphic maps which satisfy
\begin{equation}\label{a}
\|f-\id\|_{C^0(B_{0}(r))}\leq \delta, \quad \|g-\id\|_{C^0(B_{0}(r))}\leq \delta,
\end{equation}
then their commutator $[f,g]$ is defined on $B_0(r-4\delta-\tau)$ and satisfies
\begin{equation}\label{b}
\|[f,g]-\id\|_{C^0(B_{0}(r-4\delta-\tau))}\leq \frac{2}{\tau}\|f-\id\|_{C^0(B_{0}(r))}\|g-\id\|_{C^0(B_{0}(r))}.
\end{equation}
We use this to prove the case $n+1$ of the induction by taking $$r:=\ve_n,\quad \delta:=\frac{\ve}{2^n\cdot 32},\quad \tau:=\frac{\ve}{2^n\cdot 8},$$
and applying it to two arbitrary $f,g\in S^{(n)}$. These satisfy \eqref{a} by induction hypothesis, and so $[f,g]$ is defined on the ball centered at the origin of radius
$$\ve_n-4\frac{\ve}{2^n\cdot 32}-\frac{\ve}{2^n\cdot 8}=\ve_{n+1},$$
and by \eqref{b} it satisfies
$$\|[f,g]-\id\|_{C^0(B_{0}(\ve_{n+1}))}\leq\frac{2}{\tau}\delta^2=\frac{\delta}{2}=\frac{\ve}{2^{n+1}\cdot 32},$$
as desired.
\end{proof}

The next result, appearing in \cite[Lemma~7.2]{RebeloRoeder2021_Dynamics-of-groups-of-birational-automorphisms-of-cubic-surfaces-and-Fatou/Julia}, will be useful in exhibiting explicit examples satisfying the assumptions of \autoref{thm:common_domain_of_definition}.
We will denote by $\id$ both the identity map and the identity matrix acting on $\bC^d$, and by $\norm{-}_{\rm Mat}$ the matrix norm on $n\times n$ matrices.

\begin{proposition}[Fixed points with small derivative]
	\label{prop:fixed_points_with_small_derivative}
	For any $0<\ve_0\leq 1$ and holomorphic map $\gamma:B_0(\ve_0)\to\bC^d$ satisfying
	\[
		\gamma(0)=0 \text{ and }\norm{D\gamma(0)-\id}_{\rm Mat} \leq \frac{1}{64},
	\]
	there exists $\ve_1>0$, depending on $\gamma$, with the following property.
	For any $\ve\in (0,\ve_1)$, the map restricted to $B_0(\ve)$ satisfies
	\begin{equation}\label{s}
		\norm{\gamma -\id}_{C^0(B_0(\ve))}\leq \frac{\ve}{32}.
	\end{equation}
\end{proposition}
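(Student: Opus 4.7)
The plan is to Taylor-expand $\gamma$ at the origin and split the estimate into a linear contribution, controlled by the hypothesis on $D\gamma(0)$, and a higher-order remainder that can be made arbitrarily small by shrinking $\ve$.

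First I would write, for $z\in B_0(\ve_0)$,
\[
	\gamma(z) - z = \bigl(D\gamma(0) - \id\bigr)\, z + R(z),
\]
where $R(z) := \gamma(z) - \gamma(0) - D\gamma(0)\, z$ is the holomorphic remainder. Since $\gamma(0)=0$ and $R$ vanishes to order $2$ at the origin, standard Cauchy estimates on, say, the ball $B_0(\ve_0/2)$ give a constant $C=C(\gamma)$ with $\|R(z)\| \leq C\|z\|^2$ for all $z$ in some fixed neighborhood of $0$.

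Then, for $z \in B_0(\ve)$ with $\ve$ small, I would bound the two terms separately: the linear term satisfies
\[
	\bigl\|(D\gamma(0)-\id)\,z\bigr\| \leq \tfrac{1}{64}\|z\| \leq \tfrac{\ve}{64}
\]
by hypothesis, and the remainder satisfies $\|R(z)\| \leq C\ve^2$. Choosing $\ve_1 := \min(\ve_0/2,\, 1/(64C))$, one gets $C\ve^2 \leq \ve/64$ for every $\ve\in(0,\ve_1)$, so combining the two estimates yields $\|\gamma(z)-z\| \leq \ve/64 + \ve/64 = \ve/32$, which is exactly \eqref{s}.

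There is no real obstacle here; the only thing to be careful about is that $\ve_1$ is allowed to depend on $\gamma$, which is essential since $C$ depends on the size of higher derivatives of $\gamma$ (equivalently, on the Cauchy bound from $\|\gamma\|_{C^0(B_0(\ve_0/2))}$). Once this dependence is acknowledged, the argument is a one-line application of Taylor's theorem with the linear bound $1/64$ from the hypothesis and the quadratic bound absorbing into the remaining $1/64$ of slack.
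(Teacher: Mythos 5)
Your proof is correct and is essentially the paper's argument: both split $\gamma - \id$ into the linear part $(D\gamma(0)-\id)z$, bounded by $\ve/64$ from the hypothesis, and a quadratic Taylor remainder, bounded by $C_\gamma\ve^2 \leq \ve/64$ once $\ve < 1/(64C_\gamma)$. The paper merely phrases the same computation after conjugating $\gamma$ by the scaling $\Lambda_\ve$ so that the Taylor estimate is stated on the unit ball, which is a cosmetic difference.
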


\begin{proof}
For $0<\ve<\ve_1$ (where $\ve_1$ is to be determined), let $\Lambda_\ve(z_1,\dots,z_n)=(\ve z_1,\dots,\ve z_n)$ be the scaling map, and let $\gamma_\ve:=\Lambda_\ve^{-1}\circ\gamma\circ\Lambda_\ve$. This is a holomorphic map on $B_0(1)$ that satisfies
	\[
		\gamma_\ve(0)=0 \text{ and }\norm{D\gamma_\ve(0)-\id}_{\rm Mat} \leq \frac{1}{64}.
	\]
An application of the Taylor formula gives
$$\norm{\gamma_\ve -\id}_{C^0(B_0(1))}\leq \norm{D\gamma_\ve(0)-\id}_{\rm Mat}+C_\gamma \ve\leq\frac{1}{64}+C_\gamma \ve,$$
for some constant $C_\gamma$ that depends on the size of the Hessian of $\gamma$. Thus, it suffices to choose $\ve_1=\frac{1}{64 C_\gamma},$ and we have
$$\norm{\gamma_\ve -\id}_{C^0(B_0(1))}\leq\frac{1}{32},$$
which is equivalent to \eqref{s}.
\end{proof}



\subsection{\texorpdfstring{$(2,2,2)$}{(2,2,2)}-surfaces and canonical currents}
	\label{ssec:2_2_2_surfaces_and_canonical_currents}

For basic background on K3 surfaces, see \cite{BeauvilleBourguignonDemazure1985_Geometrie-des-surfaces-K3:-modules-et-periodes,Huybrechts2016_Lectures-on-K3-surfaces} and, for an introduction to complex automorphisms of K3 surfaces see \cite{Filip_notes_K3}.
Our main examples, the $(2,2,2)$-surfaces, were first noted by Wehler \cite{Wehler1988_K3-surfaces-with-Picard-number-2}.

\subsubsection{Setup}
	\label{sssec:setup_2_2_2_surfaces_and_canonical_currents}
We work over the complex numbers.
Consider the $3$-fold $(\bP^1)^3$, with its family of smooth anticanonical divisors given by degree $(2,2,2)$-surfaces, i.e. let $\cU\subset \bC^{27}$ denote the parameter space of coefficients of an equation
\[
	\sum_{0\leq i,j,k\leq 2}c_{ijk}x^iy^j z^k = 0 \quad \text{ in }(\bA^1)^3
\]
that yield smooth surfaces when compactified in $(\bP^1)^{3}$.
We will call these $(2,2,2)$-surfaces.
We consider for simplicity the full set of equations, without identifying surfaces equivalent under the action of $(\PGL_2)^{3}$.

\begin{definition}[Strict $(2,2,2)$ example]
	\label{def:strict_222_example}
	We will say that a $(2,2,2)$-surface is \emph{strict} if the rank of its \Neron--Severi group (over $\bC$) is the minimal possible, i.e. $3$.
\end{definition}
Note that an at most countable dense union of codimension-one subsets in $\cU$ consists of non-strict $(2,2,2)$-surfaces.
For strict $(2,2,2)$-surfaces, the \Neron--Severi group equipped with its intersection form is isometric to $\bR^{1,2}$ (after extension of scalars to $\bR$).

\subsubsection{Some recollections from topology}
	\label{sssec:some_recollections_from_topology}
Recall that $F_\sigma$-sets are countable unions of closed sets, while $G_\delta$-sets are countable intersections of open ones.
It follows from standard results in the moduli theory of K3 surfaces that strict $(2,2,2)$-surfaces form a dense $G_\delta$-set in $\cU$, which in fact has full Lebesgue measure.
Indeed, parameters giving strict $(2,2,2)$-surfaces are the complement of countably many divisors in the full parameter space, see e.g. \cite{oguiso}.

\subsubsection{Involutions}
	\label{sssec:involutions}
For any $u\in \cU$, denote the associated surface by $X_u\subset (\bP^1)^3$.
The projection onto one of the coordinate planes $X_u\to (\bP^1)^2$ is two-to-one and so $X_u$ admits an involution exchanging the two sheets.
Denote by $\sigma_x,\sigma_y,\sigma_z$ the three involutions obtained in this manner.

\subsubsection{Canonical currents}
	\label{sssec:canonical_currents}
We can apply \cite[Thm.~1]{FilipTosatti2021_Canonical-currents-and-heights-for-K3-surfaces} to any strict $(2,2,2)$-surface $X_u$ with $u\in \cU$.
In that theorem a certain space $\partial^{\circ}\Amp_c(X_u)$ appears, which on strict $(2,2,2)$-surfaces reduces to the boundary of the ample cone $\partial\Amp(X_u)$, so it consists of nef cohomology classes $[\eta]\in \NS_\bR(X_u)\subset H^{1,1}(X_u)$ satisfying $[\eta]^2=0$.
Since $\NS_\bR(X_u)$ equipped with the intersection pairing is isometric to $\bR^{1,2}$, the space $\partial\Amp(X_u)$ is isomorphic to one component of the null-cone in this Minkowski space.
Note that in the general form of the result, one needs to replace the rational rays in $\partial\Amp(X_u)$ by their blowups; since in the case of a rank $3$ \Neron--Severi group it would mean blowing up rays on a surface, no extra points need to be added.

Next, \cite[Thm.~1]{FilipTosatti2021_Canonical-currents-and-heights-for-K3-surfaces} shows that each cohomology class $[\eta]\in \partial \Amp(X_u)$ has a canonical positive representative $\eta$, which additionally has $C^0$ potentials.
The representative is unique when the class is irrational, and a preferred representative in the rational (also called parabolic) classes exists that makes the entire family of currents continuous in the $C^0$-topology of the potentials for the currents.

We will show in \autoref{thm:gaps_in_the_support_of_canonical_currents} below that some of the canonical representatives do not have full support in $X_u$.
Specifically, we will show that there exists an open set $\cU_0\subset \cU$ and a dense $G_\delta$ set of $u\in \cU_0$ for which some of the canonical currents $\eta$ do not have full support in $X_u$.

But first, we will show that the set of cohomology classes $[\eta]$ for which the gaps in the support are constructed contain, after projectivization, a closed uncountable set.

\subsubsection{Free subgroups of automorphisms}
	\label{sssec:free_subgroups_of_automorphisms}
We will consider subgroups of automorphisms of $X_u$ freely generated by five elements.
Specifically, $\sigma_x,\sigma_y,\sigma_z$ generate a group $\Gamma_{\sigma}\subseteq\Aut(X_u)$ isomorphic to $\left(\bZ/2\right)*\left(\bZ/2\right)*\left(\bZ/2\right)$, in other words there are no relations between them except that $\sigma_{i}^2=\id$ for $i=x,y,z$.
This can be verified by considering the action on the hyperbolic space inside the \Neron--Severi group of $X_u$ (see for instance \cite[Prop.6.1]{Filip2019_Tropical-dynamics-of-area-preserving-maps} for the explicit matrices corresponding to the action in the upper half-space model).

\begin{proposition}[Free group on five generators]
	\label{prop:free_group_on_five_generators}
	Consider the surjective homomorphism $\Gamma_{\sigma}\onto (\bZ/2)^{\oplus 3}$ sending $\sigma_x,\sigma_y,\sigma_z$ to $(1,0,0),(0,1,0),(0,0,1)$ respectively.

	Then its kernel $K_{\sigma}$ is a free group on five generators.
\end{proposition}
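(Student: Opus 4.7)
The plan is to apply the Kurosh subgroup theorem to show $K_\sigma$ is free, and then compute its rank via an Euler characteristic count. Observe first that the given map $\Gamma_\sigma \onto (\bZ/2)^{\oplus 3}$ is precisely the abelianization map of $\Gamma_\sigma$, since the abelianization of a free product is the direct sum of abelianizations; so $K_\sigma$ is simply the commutator subgroup $[\Gamma_\sigma,\Gamma_\sigma]$.

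For freeness, Kurosh's theorem asserts that any subgroup $H$ of a free product $A_1 \ast A_2 \ast A_3$ is itself a free product of a free group together with intersections of the form $g A_i g^{-1} \cap H$ over a set of double coset representatives. Applied to $H = K_\sigma$ with $A_i = \langle \sigma_i \rangle \cong \bZ/2$, each such intersection is either trivial or all of $g \langle \sigma_i \rangle g^{-1}$; the latter is ruled out because the abelianization map sends every conjugate $g \sigma_i g^{-1}$ to the nonzero class $\phi(\sigma_i)\in (\bZ/2)^{\oplus 3}$. Hence every Kurosh factor is trivial and $K_\sigma$ is free.

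For the rank, I would use the graph-of-groups decomposition of $\Gamma_\sigma$: a star with central vertex carrying the trivial group, three outer vertices each with stabilizer $\bZ/2$, and trivial edge groups. This yields
\[
\chi(\Gamma_\sigma) \;=\; 1 + 3 \cdot \tfrac{1}{2} - 3 \;=\; -\tfrac{1}{2}.
\]
Since Euler characteristic is multiplicative under finite-index subgroups and $[\Gamma_\sigma : K_\sigma] = 8$, one obtains $\chi(K_\sigma) = -4$. Combined with the identity $\chi(F_r) = 1 - r$ for the free group of rank $r$, this forces $r = 5$. Equivalently and more concretely, the Bass--Serre tree $T$ of $\Gamma_\sigma$ has all torsion of $\Gamma_\sigma$ confined to vertex stabilizers, so the torsion-free $K_\sigma$ acts freely on $T$ and $K_\sigma \cong \pi_1(K_\sigma \backslash T)$; counting orbits of vertices ($8$ central plus $3 \cdot 4$ outer) against orbits of edges ($3 \cdot 8$) recovers $\chi = 20 - 24 = -4$. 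The only step requiring attention is the triviality of the Kurosh intersections, which reduces to the abelianization observation above; everything else is bookkeeping.
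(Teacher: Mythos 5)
Your proof is correct but takes a genuinely different route from the paper's. The paper proceeds in two stages through covering space theory: it first identifies the kernel of $\Gamma_\sigma \onto \bZ/2$ (each $\sigma_i \mapsto 1$) with the free group $F_2 = \langle \sigma_x\sigma_y, \sigma_y\sigma_z\rangle$, viewed as $\pi_1$ of the thrice-punctured sphere, and then computes the rank of $K_\sigma$ as a further index-$4$ subgroup of $F_2$ using the techniques of Hatcher \S1.A (with a pleasant alternative picture of the corresponding $(\bZ/2)^{\oplus 2}$-cover as a ``pillowcase'' with six punctures). Your argument instead handles freeness and rank by separate general principles: Kurosh's theorem gives freeness once you observe that every conjugate of a $\sigma_i$ survives in the abelianization and hence misses $K_\sigma$, and the rank then drops out of the multiplicativity of Euler characteristic, $\chi(K_\sigma) = 8 \cdot \chi(\Gamma_\sigma) = 8\cdot(-\tfrac12) = -4$, together with $\chi(F_r) = 1 - r$. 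Your Bass--Serre reformulation ($20$ vertex orbits against $24$ edge orbits, using normality of $K_\sigma$ to count double cosets) is the tree-theoretic shadow of the paper's covering-space count and is also correct. The paper's route is more hands-on and self-contained at the level of surfaces and covers; yours is shorter once Kurosh and the Euler characteristic formalism are available, and it makes transparent why no torsion can survive in $K_\sigma$. Both are valid.
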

	The above homomorphism corresponds to evaluating the derivatives of the transformations at the common fixed point of the transformations described in \autoref{sssec:fixed_point_and_derivatives}. \autoref{prop:free_group_on_five_generators} will provide us with a free group with five generators, each of whose derivative at the fixed point is the identity.
\begin{proof}
We will divide our analysis by looking at the homomorphisms $\Gamma_{\sigma}\onto (\bZ/2)^{\oplus 3}\onto \bZ/2$ where the last map sends each generator of a summand to the unique nonzero element.

Now the kernel of $\Gamma_{\sigma}\onto \bZ/2$ sending each $\sigma_i$ to $1\in \bZ/2$ is the free group on two letters, generated by $a:=\sigma_x\sigma_y$ and $b:=\sigma_y\sigma_z$.
Indeed this kernel is the fundamental group of the Riemann sphere with $3$ points removed.
This assertion follows by considering the action of $\Gamma_{\sigma}$ on the hyperbolic plane by reflection in the sides of a geodesic triangle with vertices at infinity.
The quotient surface is an orbifold triangle, while the quotient by $K_{\sigma}$ corresponds to gluing two copies of the triangle along geodesic sides, to obtain a triply-punctured sphere.

Now $K_{\sigma}$ is contained with finite index in the free group on $a,b$, and is visibly given as the kernel of the surjection onto $(\bZ/2)^{\oplus 2}$ sending $a\mapsto (1,0)$ and $b\mapsto (0,1)$.
One can then work out the associated covering space and rank of free group, using the techniques in \cite[\S1.A]{Hatcher2002_Algebraic-topology}, and determine that $K_{\sigma}$ is a free group on $5$ generators.

Alternatively, the corresponding $(\bZ/2)^{\oplus 2}$-covering space of the triply punctured Riemann sphere can be visualized as a square-shaped ``pillowcase'' with four punctures at the corners, and two additional punctures in the center of the two faces.
The involutive automorphisms are rotations by $180^{\circ}$ through an axis that goes across two opposite punctures.
\end{proof}

\subsubsection{Largeness of the set currents with gaps}
	\label{sssec:largeness_of_the_set_currents_with_gaps}
To continue, we select $\gamma_1,\gamma_2,\gamma_3,\gamma_4,\gamma_5\subset K_{\sigma}$ to be five elements freely generating the group.
Next, the construction of \autoref{sssec:derived_series} applies with $S:=\{\gamma_1,\ldots,\gamma_5,\gamma_1^{-1},\ldots\gamma_5^{-1}\}$ and yields a subset $S^{\bullet}\subset \Aut(X_u)$ consisting of iterated commutators.
Fix a \Kahler metric $\omega_0$ on $X_u$, with volume normalized to $[\omega_0]^2=1$, and let $\bH^2(X_u)$ denote the hyperbolic plane of all nef cohomology classes satisfying $[\omega]^2=1$.

\begin{proposition}[Uncountably many currents with gaps]
	\label{prop:uncountably_many_currents_with_gaps}
	The intersection of the closure of the set $S^{\bullet}\cdot [\omega_0]\subset \bH^2(X_u)$ with the boundary $\partial \bH^2(X_u)$ is an uncountable closed set.
\end{proposition}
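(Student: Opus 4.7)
The set is closed by construction, so the real content is the uncountability. The plan is to realize a continuum of boundary limit points of the orbit $S^{\bullet}\cdot[\omega_0]$ by combining the iterated free-group structure of $S^{\bullet}$ with the hyperbolic dynamics on $\partial\bH^2(X_u)$.

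First, set up the isometric action: $K_\sigma$ acts on $\NS_\bR(X_u)\cong \bR^{1,2}$ preserving the intersection pairing, hence isometrically on $\bH^2(X_u)$. For strict $(2,2,2)$-surfaces the image of $\Gamma_\sigma$ is a non-elementary Coxeter reflection group, with the explicit matrices of $\sigma_x,\sigma_y,\sigma_z$ as in \cite[Prop.~6.1]{Filip2019_Tropical-dynamics-of-area-preserving-maps}; the composition $\sigma_x\sigma_y\sigma_z$ has positive dynamical degree and so acts loxodromically on $\bH^2$. Since $K_\sigma$ is a finite-index subgroup of $\Gamma_\sigma$, it too contains loxodromic isometries with distinct fixed points, making it a non-elementary discrete free subgroup of $\mathrm{Isom}(\bH^2)$ of rank $5$, with an uncountable closed limit set $\Lambda\subset\partial\bH^2(X_u)$.

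The core step is to show $\Lambda\subset \overline{S^{\bullet}\cdot[\omega_0]}\cap\partial\bH^2$. I iterate \autoref{prop:fast_ramification}: at every level $n\geq 0$, the set $S^{(n+1)}$ contains $k_{n+1}=\binom{k_n}{2}$ elements that freely generate a subgroup $F^{(n+1)}\subset K_\sigma$, starting with $k_0=5$, so $k_n\to\infty$ doubly exponentially. For generic $u\in\cU$ these $k_n$ generators are loxodromic with pairwise distinct fixed points; a standard ping-pong argument exhibits a Schottky subsystem of $F^{(n)}$ whose Cantor-type limit set $\Lambda^{(n)}\subset\Lambda$ is itself uncountable.

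The main obstacle is that $S^{\bullet}$ is \emph{not} a subgroup: it is closed only under the iterated single-commutator operation of \autoref{sssec:derived_series}, so arbitrary words in the level-$n$ generators of $F^{(n)}$ do not, a priori, lie in $S^{\bullet}$. The way around this is that $S^{(m)}$ for $m\geq n$ contains iterated commutators of the generators of $F^{(n)}$, and under the Schottky dynamics these iterated-commutator orbits accumulate on a dense subset of $\Lambda^{(n)}$. Concretely, to each infinite sequence $(a_m)_{m\geq 1}$ with $a_m$ chosen among the level-$m$ generators, one associates an element $g_m\in S^{(m)}$ built by iterated commutation involving $a_m$; the orbit point $g_m\cdot[\omega_0]$ is forced out in $\bH^2$ (its word length in the $\gamma_i$ grows like $4^m$, and by discreteness of the action this translates to hyperbolic displacement going to infinity), so the sequence subconverges to some boundary point. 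The Schottky disjointness at each level ensures that distinct symbolic sequences yield distinct boundary limits, producing an injection from a Cantor space into $\overline{S^{\bullet}\cdot[\omega_0]}\cap \partial\bH^2$ and hence the desired uncountable set.
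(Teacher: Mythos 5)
Your proposal and the paper's proof both exploit the fast ramification of $S^{\bullet}$ under iterated commutation (\autoref{prop:fast_ramification}), and both attempt to parametrize uncountably many boundary limits by infinite symbolic sequences. You also correctly identify the one genuine subtlety, namely that $S^{\bullet}$ is not closed under arbitrary multiplication, so one cannot invoke group-theoretic limit-set machinery directly. But from that point the two arguments diverge, and the paper's route is simpler and avoids two soft spots in yours.

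The paper argues entirely inside the Cayley tree of the free group $K_{\sigma}$. It builds a nested sequence of finite subtrees $\cT_k$, where the leaves of $\cT_{k+1}$ are obtained by extending the leaves of $\cT_k$ through elements of $S^{(k+1)}$; \autoref{prop:fast_ramification} guarantees at least $3$-fold branching at every stage, so $\cT_{\infty}$ has uncountably many rays. The only geometric input is then that the natural map from $\partial F_5$ to $\partial\bH^2$ is injective off the countable set of parabolic points, which pushes the uncountability through. No ping-pong, no Schottky subsystems, no axes. Your argument instead invokes the limit set $\Lambda$ of $K_\sigma$, loxodromicity of the level-$n$ commutator generators, and a claim that the iterated-commutator orbits ``accumulate on a dense subset of $\Lambda^{(n)}$.'' The first extra hypothesis is not needed and not obviously true: nothing prevents some commutators from being parabolic (e.g.\ preserving an elliptic fibration), and the paper's tree construction is happily indifferent to the isometry type of individual elements. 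The density claim is actually incorrect as stated: words in $S^{(m)}$ for $m>n$ are a thin, highly non-generic subset of reduced words in $F^{(n)}$, and there is no reason their orbit closure should be dense in $\Lambda^{(n)}$; fortunately uncountability does not require density, but the statement should be dropped. Finally, your justification that distinct symbolic sequences give distinct boundary limits via ``Schottky disjointness at each level'' would require nontrivial care to make precise, since the nesting happens across different generating levels; the paper avoids this by working with rays in a single tree and the essentially-injective boundary map. In short: your diagnosis of the difficulty is right, but the cure is overweight; the tree-ray counting argument of the paper is the proportionate one.
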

\begin{proof}
	That the set is closed follows from its definition.

	To show that the set is uncountable, we will argue on the boundary of the free group on the five initial generators, and use that the natural map from the boundary of the free group to the hyperbolic space is injective, except perhaps at the countably many parabolic points.

	For this, let $\cT$ denote the Cayley graph of the free group on five generators; it is a $10$-valent infinite regular tree.
	Define the sequence of finite subtrees $\cT_k$, where $\cT_0$ consists of the identity vertex, and $\cT_{k+1}$ is obtained from $\cT_k$ by connecting the leaves of $\cT_k$ with the elements in $S^{(k+1)}$.
	From \autoref{prop:fast_ramification} it follows that the number of new edges added to the leaves at each step is at least $3$.
	Therefore, the number of infinite paths starting at the origin in $\cT_{\infty}:=\cup_{k\geq 0}\cT_k$ is uncountable, and the claim follows.
\end{proof}



\subsection{An example with slow commutators}
	\label{ssec:an_example_with_slow_commutators}

\subsubsection{Setup}
	\label{sssec:setup_an_example_with_slow_commutators}
To show that the assumptions of \autoref{prop:fixed_points_with_small_derivative} are satisfied in practice, we start with an explicit equation:
\begin{align}
	\label{eqn:simplest_example}
	(1+x^2)(1+y^2)(1+z^2) + xyz = 1
\end{align}
Let us note that \autoref{eqn:simplest_example} determines a \emph{singular} $(2,2,2)$-surface, with a singularity at the origin $0\in \bC^3$.
We will construct an open set $\cU_0$ of smooth $(2,2,2)$-surfaces by taking perturbations of the above equation.

\subsubsection{Automorphisms of ambient space}
	\label{sssec:automorphisms_of_ambient_space}
Let $u_0\in \bC^{27}$ denote the point corresponding to the choice of parameters as in \autoref{eqn:simplest_example}, it lies outside $\cU$ but any analytic neighborhood of $u_0$ in $\bC^{27}$ intersects $\cU$ in a nonempty open set.
We have three explicit involutions $\sigma_{u_0,x},\sigma_{u_0,y},\sigma_{u_0,z}$:
\[
	\sigma_{u_0,x}(x,y,z) = \left(\frac{-yz}{(1+y^2)(1+z^2)} - x, y, z\right)
\]
and similarly for $\sigma_{u_0,y},\sigma_{u_0,z}$, which we view as holomorphic maps defined in a neighborhood of $0\in \bC^3$.

\subsubsection{Fixed point and Derivatives}
	\label{sssec:fixed_point_and_derivatives}
It is immediate from the explicit formulas that all three involutions preserve the point $0\in \bC^3$.
Furthermore, their derivatives at that point are matrices of order two:
\[
	D\sigma_{u_0,x}(0,0,0) = \begin{bmatrix}
		-1 & & \\
		& 1 & \\
		& & 1
	\end{bmatrix}
	\text{ and analogously for }\sigma_{u_0,y},\sigma_{u_0,z}.
\]

We now consider $\sigma_{u,x},\sigma_{u,y},\sigma_{u,z}$ for $u\in \bC^{27}$ in a sufficiently small neighborhood of $u_0$.
Then we can regard the $\sigma$'s as holomorphic maps defined in a neighborhood of $0\in \bC^3$, preserving the intersection of $X_u$ with the fixed neighborhood.

We can now use these observations to establish:
\begin{theorem}[Gaps in the support of canonical currents]
	\label{thm:gaps_in_the_support_of_canonical_currents}
	There exists a nonempty open set $\cU_0$ in the analytic topology of smooth $(2,2,2)$-surfaces with the following property.
	
	For each strict K3 surface $X_u$ with $u\in \cU_0$, there exists a dense $F_\sigma$-set of rays $F$ on the boundary of the ample cone of $X_u$ such that for any $[\eta]\in F$, the canonical current $\eta$ provided by \cite[Thm.~1]{FilipTosatti2021_Canonical-currents-and-heights-for-K3-surfaces} is supported on a proper closed subset of $X_u$.
	Furthermore $F$ determines an uncountable set of rays.
\end{theorem}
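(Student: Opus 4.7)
My plan is to combine the explicit singular example \autoref{eqn:simplest_example} with the commutator estimates of \autoref{thm:common_domain_of_definition} and \autoref{prop:fixed_points_with_small_derivative}, the uncountability result \autoref{prop:uncountably_many_currents_with_gaps}, and the continuity of canonical currents in their cohomology class from \cite[Thm.~1]{FilipTosatti2021_Canonical-currents-and-heights-for-K3-surfaces}. The canonical currents on the boundary rays of interest will be identified with weak limits of normalized pullbacks of a fixed Kähler form.

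\emph{Construction of $\cU_0$ and commutator decay.} At $u_0$, the three involutions $\sigma_{u_0,*}$ share the fixed point $0\in\bC^3$ with the diagonal derivatives recorded in \autoref{sssec:fixed_point_and_derivatives}; consequently every element of $K_\sigma$ has derivative $\id$ at $0$. By the implicit function theorem, for $u$ in a small open $\cU_0'\subset\bC^{27}$ near $u_0$, the involutions $\sigma_{u,*}$ share a holomorphically varying fixed point $p_u$ with $p_{u_0}=0$, and after shrinking $\cU_0'$ the five free generators $\gamma_1,\dots,\gamma_5$ of $K_\sigma$ satisfy $\norm{D\gamma_i(p_u)-\id}_{\rm Mat}\le 1/64$ throughout. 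Taking $\cU_0:=\cU_0'\cap\cU$ (nonempty since smooth $(2,2,2)$-surfaces are dense near $u_0$) and fixing any $u\in\cU_0$, \autoref{prop:fixed_points_with_small_derivative} produces $\ve_u\in(0,1]$ for which the hypotheses of \autoref{thm:common_domain_of_definition} hold centered at $p_u$: every $\gamma\in S^{(n)}$ is defined on $B_{p_u}(\ve_u/2)$ with $\norm{\gamma-\id}_{C^0(B_{p_u}(\ve_u/2))}\le\ve_u/(2^n\cdot 32)$.

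\emph{Currents as normalized pullbacks.} Let $L\subset\partial\bH^2(X_u)$ be the uncountable closed accumulation set of $S^\bullet\cdot[\omega_0]$ from \autoref{prop:uncountably_many_currents_with_gaps}, and let $L_{\rm par}\subset L$ denote the countable subset of parabolic (rational) rays. For an irrational $[\eta]\in L\setminus L_{\rm par}$, pick a sequence $\gamma_k\in S^{(n_k)}$ with $n_k\to\infty$ such that the classes $\gamma_k^*[\omega_0]\in\bH^2(X_u)$ accumulate projectively on $[\eta]$, and set $\lambda_k:=[\omega_0]\cdot\gamma_k^*[\omega_0]\to\infty$. The smooth closed positive forms $\mu_k:=\gamma_k^*\omega_0/\lambda_k$ have total mass $1$, and their cohomology classes converge to a fixed representative of the ray $[\eta]$; any weak limit is a closed positive $(1,1)$-current in that class, so by uniqueness of the canonical representative on irrational rays \cite[Thm.~1]{FilipTosatti2021_Canonical-currents-and-heights-for-K3-surfaces}, the full sequence $\mu_k$ converges weakly to the canonical current $\eta$. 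On $B_{p_u}(\ve_u/2)\cap X_u$, however, the commutator bound together with Cauchy estimates for holomorphic maps imply that $\gamma_k^*\omega_0$ is uniformly bounded in $C^0$; division by $\lambda_k\to\infty$ then forces the mass of $\mu_k$ on this set to tend to $0$, so $\eta$ vanishes there and $\supp\eta$ is proper.

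\emph{Density, $F_\sigma$-structure, and expected obstacle.} For any $T\in\Aut(X_u)$ the canonical current of $T^*[\eta]$ is $T^*\eta$, whose support is disjoint from $T^{-1}\!\left(B_{p_u}(\ve_u/2)\cap X_u\right)$. I accordingly define
\[
F:=\bigcup_{n\ge 1,\,T\in\Aut(X_u)}\left\{[\eta]\in\partial\Amp(X_u):\eta\Big(T^{-1}\overline{B_{p_u}(1/n)}\cap X_u\Big)=0\right\},
\]
where $\eta$ denotes the canonical current of $[\eta]$. Each set in this countable union is closed in $\partial\Amp(X_u)$ by the weak continuity of $[\eta]\mapsto\eta$ from \cite[Thm.~1]{FilipTosatti2021_Canonical-currents-and-heights-for-K3-surfaces} together with upper semicontinuity of mass on compact sets; hence $F$ is $F_\sigma$. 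The previous paragraph shows $F\supseteq\bigcup_{T\in\Aut(X_u)}T\cdot(L\setminus L_{\rm par})$, so $F$ is uncountable. Density follows because the three involutions act on $\NS_\bR(X_u)\cong\bR^{1,2}$ as the ideal Coxeter triangle group $(\infty,\infty,\infty)$ (by a direct calculation using $D_i^2=0$ and $D_i\cdot D_j=2$ for $i\neq j$), a Fuchsian lattice whose action on $\partial\bH^2(X_u)$ has dense orbits. The most delicate step is the weak-limit identification in the second paragraph: total mass must not drop in the limit (ensured by continuity of cohomology pairings on a compact surface) and the limit class must be irrational for uniqueness to apply (whence the exclusion of $L_{\rm par}$). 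The remaining items (holomorphic dependence of $p_u$, density of smooth parameters near $u_0$, and the Coxeter calculation) are elementary.
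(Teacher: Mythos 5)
Your argument follows essentially the same route as the paper's: iterated commutators in $K_\sigma$ stay close to the identity near the origin by \autoref{thm:common_domain_of_definition}, normalized pullbacks (the paper uses pushforwards, which is equivalent) of a K\"ahler form converge to the canonical current, and that current therefore vanishes on the ball, with the $F_\sigma$-set $F$ obtained by spreading $F_0$ around by $\Aut(X_u)$. The identification of the weak limit with the canonical current via uniqueness on irrational rays, rather than via \cite[Thm.~4.2.2]{FilipTosatti2021_Canonical-currents-and-heights-for-K3-surfaces} directly, works but forces you to discard the parabolic rays by hand; the paper handles this by citing the convergence statement first and noting afterwards (\autoref{rmk:avoidance_of_parabolic_points}) that parabolic rays never occur. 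Your explicit Coxeter-group check of minimality and the explicit $F_\sigma$ description are fine substitutes for the paper's appeal to minimality and to translates of a closed set.

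There are two points where your version needs patching. First, and most importantly: you introduce a $u$-dependent common fixed point $p_u$ of the three involutions and recenter \autoref{prop:fixed_points_with_small_derivative} at $p_u$, producing a $u$-dependent radius $\ve_u$. But there is no reason that $p_u$ lies on $X_u$ once $u\neq u_0$ (the three ramification curves on the smooth $X_u$ need not have a common point), so you must ensure $B_{p_u}(\ve_u/2)\cap X_u\neq\emptyset$; as written, $\cU_0$ is fixed \emph{before} $\ve_u$ is chosen, so this could fail. The fix is either to note that $\ve_u$ can be taken uniform over a compact neighborhood of $u_0$ (the constant $C_\gamma$ in the proposition is a Hessian bound, hence locally uniform) and then shrink $\cU_0$ so that $\operatorname{dist}(p_u,X_u)<\ve_u/2$, or — simpler and what the paper actually does — to keep the ball centered at $0\in\bC^3$ with $\ve$ chosen once at $u_0$ and use only the $C^0$-stability of the hypothesis of \autoref{thm:common_domain_of_definition} under small perturbations of $u$, which sidesteps the persistence of a common fixed point entirely. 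Second, a small technical slip: to show each set $\bigl\{[\eta]:\eta\bigl(T^{-1}\overline{B_{p_u}(1/n)}\cap X_u\bigr)=0\bigr\}$ is closed you invoke upper semicontinuity of mass on compact sets, but that inequality goes the wrong way; you should instead use \emph{open} balls and \emph{lower} semicontinuity of mass on open sets under weak-$*$ convergence (which, together with the $C^0$ continuity of the potentials from \cite[Thm.~1]{FilipTosatti2021_Canonical-currents-and-heights-for-K3-surfaces}, gives the closedness you want; the union over $n$ is unaffected).
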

\noindent By a ``ray'' we mean one orbit of the $\bR_{>0}$-action by scaling, so that the ``set of rays'' is the projectivization of $\partial \Amp(X_u)$.
It is implicit in the the statement above that the set $F$ is disjoint from the countably many parabolic rays.
This is justified by \autoref{rmk:avoidance_of_parabolic_points} below.

\begin{proof}
	We keep the notation as before the statement of the theorem and will consider $u\in \cU$ in a sufficiently small neighborhood of $u_0$.

	Consider the subgroup $K_{\sigma}\subset \Aut(X)$ obtained by applying \autoref{prop:free_group_on_five_generators} to the group generated by the three involutions.
	At the parameter $u_0$ all elements in $K_{\sigma}$ preserve the point with coordinate $(0,0,0)$ and have derivative equal to the identity there, see \autoref{sssec:fixed_point_and_derivatives}.
	Fix now the five free generators $\gamma_{u,i} \in K_{\sigma}$ with $i=1,\ldots,5$, as per \autoref{prop:free_group_on_five_generators}.
	Let $S^{(n)}$ denote the set of iterated commutators, as per \autoref{sssec:derived_series}.

	\autoref{prop:uncountably_many_currents_with_gaps} yields for any strict $X_u$ an uncountable closed set $F_0\subset \partial \Amp(X_u)$ with the following property. Fixing $\omega_0$ a reference K\"ahler metric on $X_u$, for any $f\in F_0$ there exists a sequence $\{s_n\}$ of automorphisms of $X_u$, with $s_n\in S^{(n)}$, and a sequence of positive scalars $\lambda_n\to+\infty$ such that
\[ f =	\lim_{n\to +\infty} \frac{1}{\lambda_n}(s_n)_{*}[\omega_0].\]
	Note that $\lambda_n\to +\infty$ since the self-intersection of $(s_n)_*[\omega_0]$ is $1$, while the self-intersection of $f$ is zero.
Applying \cite[Thm.~4.2.2, pts. 4,5]{FilipTosatti2021_Canonical-currents-and-heights-for-K3-surfaces} then shows that in the weak sense of currents we have
\[
		\eta_f = \lim_{n\to +\infty} \frac{1}{\lambda_n}(s_n)_{*}\omega_0,
	\]
	where $\eta_f$ is a canonical positive representative of the cohomology class $f$.
	Furthermore, at this stage of the argument the cohomology class $f$ might be rational, but its canonical representative is in fact unique since we consider strict $(2,2,2)$-surfaces.
	Nonetheless, see \autoref{rmk:avoidance_of_parabolic_points} below for why, in fact, this case does not occur.

	\autoref{prop:fixed_points_with_small_derivative} applies to the finitely many generators $\gamma_{u_0,i}$, so \autoref{thm:common_domain_of_definition} applies to them as well on a fixed ball $B_0(\ve)$ around $0\in \bC^3$.
	However, the assumptions of \autoref{thm:common_domain_of_definition} are stable under a small perturbation, so they hold for $\gamma_{u,i}$ for $u$ in a sufficiently small neighborhood of $u_0$.
	Therefore, by \autoref{thm:common_domain_of_definition} the maps $s_n$ approach the identity when restricted to $B_0(\ve/2)$, and so the weak limit of $\frac{1}{\lambda_n}(s_n)_* \omega_0$ vanishes in $B_0(\ve/2)\cap X_u$.
	We conclude that the support of $\eta_f$ avoids $B_0(\ve/2)$.

	Finally, the action of $\Aut(X_u)$ on the (projectivized) boundary of the ample cone is minimal, i.e. every orbit is dense, and clearly the property of having a gap in the support is invariant under applying one automorphism.
	It follows that the set $F:=\Aut(X_u)\cdot F_0$ is a dense $F_\sigma$-set with the required properties.
\end{proof}

\begin{remark}[Avoidance of parabolic points]
	\label{rmk:avoidance_of_parabolic_points}
	The set $F$ provided by \autoref{thm:gaps_in_the_support_of_canonical_currents} is disjoint from the countably many parabolic points.
	The reason is that the canonical currents at the parabolic points have full support, since they are obtained as the pullback of currents from the base $\bP^1(\bC)$ of an elliptic fibration, but the corresponding currents on $\bP^1(\bC)$ have real-analytic potentials away from the finitely many points under the singular fibers.
	The last assertion can be seen from following through the proof of \cite[Thm.~3.2.14]{FilipTosatti2021_Canonical-currents-and-heights-for-K3-surfaces} with real-analytic data.
\end{remark}

\begin{remark}[Zassenhausian points]
	\label{rmk:zassenhausian_points}
	Recall that relative to a lattice $\Gamma\subset \Isom(\bH^n)$ of isometries of a hyperbolic space, the boundary points in $\partial \bH^n$ can be called ``Liouvillian'' or ``Diophantine''.
	Specifically, a Liouvillian point is one for which the geodesic ray with the point as its limit on the boundary makes very long excursions into the cusps of $\Gamma\backslash \bH^n$, while Diophantine points are ones for which the excursions into the cusps are controlled.
	Both situations involve quantitative bounds.

	The boundary points constructed using iterated commutators as in \autoref{sssec:derived_series}, with group elements lying deeper and deeper in the derived series of $\Gamma$, could then be called ``Zassenhausian''.
	Note that in principle, geodesics with Zassenhausian boundary points will have good recurrence properties and will also be Diophantine.

	It would be interesting to see if canonical currents corresponding to Liouvillian boundary points have full support or not.
\end{remark}



\subsection{An example with no support on the real locus}
	\label{ssec:an_example_with_no_support_on_the_real_locus}

The above methods can be strengthened to construct an example of a current with no support on the real locus of a real projective K3 surface.
The starting point is a construction due to Moncet \cite[\S9.3]{Moncet2012_Geometrie-et-dynamique-sur-les-surfaces-algebriques-reelles}, who constructed real K3 surfaces with arbitrarily small entropy on the real locus.
We use some minor modifications for notational convenience, and emphasize that many different choices are possible for the initial singular real K3 surface.
Let us also note that these examples have a ``tropical'' analogue given by PL actions on the sphere, and the analogue of the finite-order action at the singular parameter corresponds to a finite order action by reflections on the cube, see \cite[\S6.2]{Filip2019_Tropical-dynamics-of-area-preserving-maps}.

\subsubsection{Setup}
	\label{sssec:setup_an_example_with_no_support_on_the_real_locus}
Let $X_0$ denote the (singular) surface
\[
	x^2 + y^2 + z^2 = 1
\]
compactified in $(\bP^1)^3$.
Its real locus $X_0(\bR)$ is a real $2$-dimensional sphere.

As before let $\cU\subset \bR^{27}$ be the subset of smooth $(2,2,2)$-surfaces, parametrized by the possible coefficients, and normalized such that the parameter $0\in \bR^{27}$ corresponds to $X_0$.
Note that $0\notin \cU$.
Let next $\cU'\subset \cU$ denote the subset of strict $(2,2,2)$-surfaces.
By the discussion in \autoref{sssec:some_recollections_from_topology} the set $\cU'$ is the complement of countably many divisors in $\cU$, and thus forms a dense $G_{\delta}$ set.

\begin{theorem}[Full gaps in the real locus]
	\label{thm:full_gaps_in_the_real_locus}
	There exists a nonempty open set $\cU_0\subset \cU\subset \bR^n$ in the analytic topology of smooth real $(2,2,2)$-surfaces with the following property.
	
	For each strict K3 surface $X_u$ with $u\in \cU_0$, there exists a dense $F_\sigma$-set of rays $F$ on the boundary of the ample cone of $X_u$ such that for any $[\eta]\in F$, the support of the canonical current $\eta$ provided by \cite[Thm.~1]{FilipTosatti2021_Canonical-currents-and-heights-for-K3-surfaces} is disjoint from the real locus $X_u(\bR)$.
	Furthermore $F$ determines an uncountable set of rays.
\end{theorem}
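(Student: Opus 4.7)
The plan is to run the strategy of \autoref{thm:gaps_in_the_support_of_canonical_currents} with the fixed point replaced by the entire real locus. First I would analyze the degenerate surface $X_0\colon x^2+y^2+z^2=1$ at the parameter $u=0$. The affine real locus is the round unit sphere $S^2\subset \bR^3$, and the three involutions act as the coordinate reflections, e.g.\ $\sigma_x(x,y,z)=(-x,y,z)$. A direct check shows that $\sigma_x\sigma_y=(\,\cdot\mapsto(-x,-y,z))$ is a $180^\circ$ rotation, hence an involution, so $[\sigma_x,\sigma_y]=(\sigma_x\sigma_y)^2=\id$ on all of $X_0$, and similarly for the other pairs. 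Together with \autoref{prop:free_group_on_five_generators}, this means that every element of the free subgroup $K_\sigma$ acts as the identity on the whole of $X_0$ at the parameter $u=0$. In particular, for $u\in\cU'$ sufficiently close to $0$ in $\bR^{27}$, the five fixed generators $\gamma_{u,1},\dots,\gamma_{u,5}$ of $K_\sigma$ are holomorphic automorphisms of $X_u$ that are $C^0$-close to the identity on any fixed compact complex neighborhood of $X_u(\bR)$ in $X_u$; also $X_u(\bR)$ stays homeomorphic to $S^2$ by a standard implicit function argument in the real analytic category.

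Next I would upgrade the commutator estimate of \autoref{thm:common_domain_of_definition} from a single ball around a fixed point to a tubular neighborhood of $X_u(\bR)$. The proof of that theorem uses only the Loray--Rebelo bound, which estimates $\|[f,g]-\id\|_{C^0(U')}$ in terms of $\|f-\id\|_{C^0(U)}\,\|g-\id\|_{C^0(U)}$ whenever $U'\subset U$ has enough ``reach'' inside $U$; this statement is local and applies on any pair of open sets. Covering $X_u(\bR)$ by finitely many coordinate balls $B_j\subset X_u$ of uniformly bounded geometry, and using the uniform smallness of $\|\gamma_{u,i}-\id\|_{C^0}$ on a slightly fattened cover, the same inductive argument as in \autoref{thm:common_domain_of_definition} yields a fixed tubular neighborhood $N'$ of $X_u(\bR)$ on which \emph{every} iterated commutator $s_n\in S^{(n)}$ is defined and satisfies $\|s_n-\id\|_{C^0(N')}\leq C\cdot 2^{-n}$; in particular each $s_n$ preserves $N'$.

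With this in hand, the conclusion follows as in the proof of \autoref{thm:gaps_in_the_support_of_canonical_currents}. Applying \autoref{prop:uncountably_many_currents_with_gaps} to any strict $X_u$ with $u$ in the (nonempty, open) set $\cU_0$ of parameters where the commutator bounds hold, one obtains an uncountable closed set $F_0\subset\partial\Amp(X_u)$ such that each $f\in F_0$ is a limit of $\tfrac{1}{\lambda_n}(s_n)_*[\omega_0]$ with $s_n\in S^{(n)}$ and $\lambda_n\to+\infty$; by \cite[Thm.~4.2.2]{FilipTosatti2021_Canonical-currents-and-heights-for-K3-surfaces} the canonical current $\eta_f$ is the weak limit of $\tfrac{1}{\lambda_n}(s_n)_*\omega_0$. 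Since each $s_n$ preserves $N'$ and tends uniformly to the identity there, the weak limit vanishes on $N'$, so $\supp\eta_f\cap X_u(\bR)=\emptyset$. Finally, because all three involutions $\sigma_x,\sigma_y,\sigma_z$ are defined over $\bR$, so are the pushforward operators, and $\phi\in\Gamma_\sigma$ permutes $X_u(\bR)$; the group $\Gamma_\sigma$ already acts minimally on the projectivized boundary of $\Amp(X_u)$, so $F:=\Gamma_\sigma\cdot F_0$ is a dense $F_\sigma$-set of rays each of whose canonical currents has support disjoint from $X_u(\bR)$.

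The main obstacle I anticipate is the uniformization of the commutator estimates across the compact real locus rather than in a single ball: one must verify that the shrinking radii $\ve_n$ in the proof of \autoref{thm:common_domain_of_definition} can be replaced by a uniformly thick tubular neighborhood of $X_u(\bR)$, and that the Loray--Rebelo bound survives the covering argument. Because the underlying estimate is purely in $C^0$-norm with ``room to shrink'', and because $X_u(\bR)$ is compact with bounded geometry uniform in $u\in\cU_0$, this should go through, but it is the one place where the argument requires genuine modification relative to \autoref{thm:gaps_in_the_support_of_canonical_currents}.
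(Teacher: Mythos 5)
Your proposal is correct and follows essentially the same route as the paper: both note that $K_\sigma$ acts as the identity on the degenerate sphere $X_0$, cover $X_0(\bR)$ by finitely many ambient coordinate balls on which the five generators are uniformly $C^0$-close to the identity for $u$ near $0$, apply the commutator iteration of \autoref{thm:common_domain_of_definition} chart by chart, and then conclude exactly as in \autoref{thm:gaps_in_the_support_of_canonical_currents}. The ``genuine modification'' you worry about at the end is in fact handled trivially by this chart-by-chart application — there is no need to uniformize shrinking radii across a tubular neighborhood, since the Loray–Rebelo estimate is invoked separately inside each fixed chart $V_i$.
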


\subsubsection{Subgroup of slow automorphisms}
	\label{sssec:subgroup_of_slow_automorphisms}
Let us first observe that the involution $\sigma_x$ acting on the surface $X_0$ in \autoref{sssec:setup_an_example_with_no_support_on_the_real_locus} is given by $\sigma_x(x,y,z)=(-x,y,z)$, and analogously for $\sigma_y,\sigma_z$.
Therefore, let $K_{\sigma}\subset \Gamma_{\sigma}$ be the group from \autoref{prop:free_group_on_five_generators} obtained as the kernel of this action; it is a free group on five generators $\gamma_i$ and acts nontrivially on any smooth and strict $(2,2,2)$-surface.

Even for smooth surfaces $X_u\subset (\bP^1)^{3}$, we will be interested only in their intersection with the affine chart $\bC^{3}$, and specifically a neighborhood of $X_0(\bR)$.
We will thus restrict to a neighborhood in $\cU$ of $u=0$ for which no additional real components arise.

\subsubsection{Good cover}
	\label{sssec:good_cover}
Choose a finite cover of $X_0(\bR)\subset \bR^3$ by open sets $V_i\subset \bC^3$ such that we have biholomorphisms $\phi_i\colon V_i \to B_0(1)\subset \bC^3$ to a ball of radius $1$ around $0$, and the preimages of the smaller balls $V_i':=\phi_i^{-1}\left(B_0(\tfrac 14)\right)$ still cover $X_0(\bR)$.

Choose now a sufficiently small open neighborhood of the origin $\cU_0\subset \bR^{27}$ such that the following property is satisfied: For each of the five generators $\gamma_j$ of $K_{\sigma}$ and their inverses, we have for every chart $V_i$ that $\gamma_{ij}':=\phi_i \circ \gamma_j \circ \phi^{-1}_i$ satisfies:
	\[
		\gamma_{ij}'\colon B_0\left(\tfrac 12\right) \to B_0(1) \text{ is well-defined and }
		\norm{\gamma_{ij}'-\id}_{B_{0}\left(\tfrac 12\right)}\leq \tfrac{1}{64}.
	\]
Require also that for any $u\in \cU_0$ that $X_u(\bR)$ is nonempty and still covered by the sets $\{V_i'\}$.

\begin{proof}[Proof of \autoref{thm:full_gaps_in_the_real_locus}]
	By \autoref{thm:common_domain_of_definition} all the commutators in the set $S^{(n)}$ as defined in \autoref{sssec:derived_series} are well-defined when conjugated to any of the charts $\phi_i$, and furthermore their distance to the identity transformation goes to zero as $n\to +\infty$.

	As in the proof of \autoref{thm:gaps_in_the_support_of_canonical_currents}, let $s_n\in S^{(n)}$ be any sequence of such commutators such that the cohomology class $\tfrac{1}{\lambda_n}(s_n)_*[\omega_0]$ converges to some class $f$.
	Then the canonical current $\eta_f$ has no support in the neighborhoods $V_i'$.
	Since these still cover $X_u(\bR)$ for $u\in \cU_0$, the result follows.
\end{proof}







\bibliographystyle{sfilip_bibstyle}
\bibliography{gaps_canonical}

\end{document}